\font\cyr=wncyr10 scaled \magstep1%
\def\Sh{\text{\cyr Sh}}
\DeclareMathOperator{\cok}{coker}
\newcommand{\fp}{\mathfrak{p}}
\newcommand{\CL}{\mathcal{L}}
\newcommand{\Hom}{\text{Hom}}
\newcommand{\af}{\text{af}}
\newcommand{\Nr}{\text{Nr}}
\newcommand{\Sp}{\text{Spec} \,}
\newcommand{\ur}{{\mathrm{ur}}}
\chardef\bslash=`\\ 
\newtheorem{theorem}{Theorem}[section]
\newtheorem{prop}[theorem]{Proposition}
\newtheorem{lem}[theorem]{Lemma}
\newtheorem{cor}[theorem]{Corollary}
\theoremstyle{definition}
\newtheorem{remark}[theorem]{Remark}
\newtheorem{example}[theorem]{Example}
\numberwithin{equation}{section}
\newtheorem*{maintheorem*}{Main Theorem}
\theoremstyle{definition}
\newtheorem{definition}{Definition}
\newcommand{\CO}{\mathcal{O}}
\newcommand{\OV}{\un{\textbf{O}}_V}
\newcommand{\SOV}{\un{\textbf{SO}}_V}
\newcommand{\Oiy}{\CO_{\{\iy\}}}
\newcommand{\X}{\mathcal{X}}
\renewcommand{\sectionmark}[1]{}
\newcommand{\N}{\un{N}}
\newcommand{\sh}{\text{sh}}
\newcommand{\diag}{\text{diag}}
\newcommand{\Br}{{\mathrm{Br}}}
\newcommand{\iy}{\infty}
\newcommand{\bk}{\bigskip}
\newcommand{\fc}{\frac}
\newcommand{\G}{\Gamma}
\newcommand{\Pic}{\text{Pic~}}
\newcommand{\dl}{\delta}
\newcommand{\ov}{\overline}
\newcommand{\un}{\underline}
\newcommand{\BG}{\mathbb{G}}
\newcommand{\BF}{\mathbb{F}}
\renewcommand{\a}{\alpha}
\newcommand{\et}{\text{\'et}}
\newcommand{\p}{\varphi}
\newcommand{\Z}{\mathbb{Z}}
\newcommand{\Q}{\mathbb{Q}}
\newcommand{\A}{\mathbb{A}}
\begin{document}

\date{}


\baselineskip20pt
\setcounter{equation}{0}
\pagestyle{plain}
\pagenumbering{arabic}

\title{The Hasse principle for bilinear symmetric forms over a ring of integers of a global function field}

\author{Rony A. Bitan}

\begin{abstract}
Let $C$ be a smooth projective curve defined over the finite field $\BF_q$ ($q$ is odd)
and let $K=\BF_q(C)$ be its function field. 
Removing one closed point $C^\af = C-\{\iy\}$ results in an integral domain $\Oiy = \BF_q[C^\af]$ of $K$,   
over which we consider a non-degenerate bilinear and symmetric form $f$ with orthogonal group $\OV$. 
We show that the set $\text{Cl}_\iy(\OV)$ of $\Oiy$-isomorphism classes in the genus of $f$ of rank $n>2$,   
is bijective as a pointed set to the abelian groups $H^2_\et(\Oiy,\un{\mu}_2) \cong \Pic(C^\af)/2$, i.e. is an invariant of $C^\af$.   
We then deduce that any such $f$ of rank $n>2$ admits the local-global Hasse principal if and only if $|\Pic(C^\af)|$ is odd.  
For rank $2$ this principle holds if the integral closure of $\Oiy$ in the splitting field of $\OV \otimes_{\Oiy} K$ is a UFD.  
\end{abstract}

\maketitle{}

\markright{The Hasse principle in integral forms}

\pagestyle{headings}

\section{Introduction}
Let $C$ be a smooth, projective, geometrically connected curve     
defined over the finite field $\BF_q$ with $q$ odd,  
and let $K = \BF_q(C)$ be its function field. 
For any prime $\fp$ of $K$, let $v_\fp$ be the induced discrete valuation on $K$.   
We remove one closed point $\iy$ from $C$, resulting in an affine curve $C^\af$,  
and consider the following ring of $\{\iy\}$-integers of $K$:
$$ \Oiy = \BF_q[C^\af] := \{ a \in K \mid v_\fp(a) \geq 0 \ \ \forall \fp \neq \iy \}. $$
Throughout the paper, an \emph{integral form} on $V \cong \Oiy^n$    
refers to a bilinear and symmetric map $f:V \times V \to \Oiy$. 
It will be called \emph{unimodular} if it is non-degenerate at any closed point of $C^\af$,  
which is equivalent in this case to $\det(f) \in \BF_q^\times$.  
Two integral forms $f$ and $g$ on $V$ are $\Oiy$\emph{-isomorphic}   
if there exists $Q \in \textbf{GL}(V)$ such that $f(u,v) = g(Q u,Q v)$ for all $u,v \in V$. 

\bk

The standard approach to classifying bilinear forms over a global field such as $K$,   
basically relies on the Hasse-Minkowski principle which states that this classification, 
expressed by the first Galois cohomology set $H^1(K,\textbf{O}_V)$   
where $\textbf{O}_V$ stands for the orthogonal group of $f$,   
is equivalent to that obtained locally everywhere, 
namely, by $\prod_\fp H^1(\hat{K}_\fp,(\textbf{O}_V)_\fp)$   
where $\hat{K}_\fp$ is the complete localization of $K$ at a prime $\fp$  
and $(\textbf{O}_V)_\fp$ is the geometric fiber of $\textbf{O}_V$ there.   
However, if one considers the classification of integral forms, 
then this local-global principle fails, leading to the notion of a \emph{genus} of a form.  
In this paper, we aim to describe geometrically the violation of this principle.  
We express the classification of integral unimodular forms from the same genus 
via $H^1_\et(\Oiy,\SOV)$ (Proposition \ref{class number and cohomology}), 
where $\un{\textbf{SO}}_V$ is the special orthogonal group scheme of $f$ defined over $\Sp \Oiy$,     
and then show that this set is bijective as a pointed set for ranks $n > 2$ 
to the abelian group $H^2_\et(\Oiy,\un{\mu}_2)$, 
i.e. it is an invariant of $C^\af$ (Proposition \ref{H2 SO_V=1}).    
Furthermore, by proving that the Brauer group of the affine curve $C^\af$ is trivial (Lemma \ref{Br(Oiy)}),  
we conclude that $H^2_\et(\Oiy,\un{\mu}_2) \cong \Pic(C^\af)/2$ (Corollary \ref{H2 mu2}).    

\bk

This description leads us to assert the validity of the Hasse principle 
for unimodular integral forms of rank $n > 2$ if and only if $|\Pic(C^\af)|$ is odd.   
For $n=2$, the Hasse principle holds if the integral closure of $\Oiy$ 
in the splitting field of the generic fiber of $\OV$ is a UFD (Theorem \ref{main theorem}). 
This result can be considered as a generalization of Theorem~3.1 in \cite{Ger1} 
in which the elementary case of $\Oiy = \BF_q[t]$ is treated (Example \ref{over affine line}). 
Its proof was initially based on the reduction by Harder, 
of the unimodular theory over $\BF_q[t]$,  
to the theory of spaces over $\BF_q$ (see \cite[Theorem~7.13]{Ger2}).

\bk \noindent

{\bf Acknowledgements:} 
The author thanks B.~Conrad, L.~Gerstein, P.~Gille and B.~Kunyavski\u\i \ 
for valuable discussions concerning the topics of the present article, 
and the anonymous referee for his instructive remarks.  

\bk

\section{Classification over rings of integers} 
The geometrically connected projective curve $C$ remains geometrically connected 
after removing the closed point $\iy$, resulting in $C^\af$. 
In order to classify integral forms,  
we shall refer to the fundamental group $\pi_1(C^\af,a)$ of $C^\af$ w.r.t. some geometric base point $a$,     
as defined by Grothendieck in \cite[V,\S4~and~\S7]{SGA1}.  
Up to isomorphism, this group (as a topological group)  
does not depend on the choice of the base point (see \cite[Ch.I,~Remark~5.1]{Mil}).  
Therefore, where one is only concerned with the group-theoretic structure of $\pi_1(C^\af,a)$, 
we may omit the base-point and write just $\pi_1(C^\af)$. 

\bk

For any prime $\fp$ of $K$, let $\CO_\fp$ be the discrete valuation ring of $K$ w.r.t. to $v_\fp$  
and let $K_\fp$ be its fraction field.   
Let $\hat{K}_\fp$ be the completion of $K_\fp$ and let $\hat{\CO}_\fp$ be its ring of integers. 
Let $k_\fp = \hat{\CO}_\fp / \fp$ be the corresponding (finite) residue field. 
Let $\hat{K}_\fp^\ur$ be the maximal unramified extension of $\hat{K}_\fp$ 
and let $\hat{\CO}_\fp^\sh$ be its ring of integers. 
Given a smooth group scheme $\un{G}_\fp$ defined over $\Sp \hat{\CO}_\fp$,  
the set $H^1_\et(\hat{\CO}_\fp,\un{G}_\fp)$ 
is bijective to the Galois cohomology set $H^1(\hat{\CO}_\fp,\un{G}_\fp)$, 
while the Galois group taken under consideration is 
$\mathrm{Gal}(\hat{\CO}_\fp^\sh / \hat{\CO}_\fp) = \mathrm{Gal}(\ov{k}_\fp / k_\fp)$   
where $\ov{k}_\fp$ stands for the algebraic closure of $k_\fp$.   
For a smooth group scheme $\un{G}$ defined over $\Oiy$, 
by writing $H^1_\et(\Oiy,\un{G}) \cong H^1_{\text{flat}}(\Oiy,\un{G})$ 
we shall refer to the action of the aforementioned (total) fundamental group $\pi_1(C^\af)$ 
(see \cite[VIII~Corollaire~2.3]{SGA4} for the \'etale, 
flat and Galois cohomology sets bijections in the smooth case).  
 
\bk

\section{Integral schemes and \'etale cohomology} \label{class group}
Let $\un{G}$ be an affine, flat and smooth 
group scheme defined over $\Sp \Oiy$ with generic fiber $G$.  
For any prime $\fp$ of $K$, the localization $(\Oiy)_\fp$ is a base change of $\CO_\fp$. 
Thus the bijection $\Sp(\Oiy)_\fp \to \Sp \CO_\fp$ is faithfully flat (see \cite[Theorem~3.16]{Liu})   
and so $\un{G}$, extended to be defined over $\Sp \hat{\CO}_\fp$ and denoted by $\un{G}_\fp$, is also smooth. 
Under these settings, we shall refer to the adelic group $\un{G}(\A)$  
and to its subgroup over the ring of $\{\iy\}$-integral ad\`eles    
$\A_\iy := \hat{K}_\iy \times \prod_{\fp \neq \iy} \hat{\CO}_\fp$.  

\begin{definition}
The \emph{class set} of $\un{G}$ is the set of double cosets  
$\mathrm{Cl}_\iy(\un{G}) := \un{G}(\A_\iy) \backslash \un{G}(\A) / G(K)$.  
It is finite (cf. \cite[Thm~1.3.1]{Con1}).  
Its cardinality, denoted by $h_\iy(\un{G})$, is called the \emph{class number} of $\un{G}$.  
\end{definition}

\begin{theorem} \label{Nis} \rm{(Ye. Nisnevich, \cite[Theorem~I.3.5]{Nis})}.   
There is an exact sequence of pointed sets: 
$$
1 \to \mathrm{Cl}_\iy(\un{G}) \to H^1_\et(\Oiy,\un{G}) \to H^1(K,G) \times \prod_{\fp \neq \iy} H^1(\hat{\CO}_\fp,\un{G}_\fp).   
$$           
\end{theorem}

\begin{lem} \label{H1(Gsc) trivial2} 
Suppose $\un{G}$ (being affine, flat and smooth defined over $\Sp \Oiy$) is connected,  
and that $G$ is almost simple, simply connected and $\hat{K}_\iy$-isotropic. 
Then $H^1_\et(\Oiy,\un{G})=0$. 
\end{lem}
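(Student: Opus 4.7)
The plan is to deploy Nisnevich's exact sequence from Theorem \ref{Nis} and kill, one by one, every term that appears to the right of $H^1_\et(\Oiy,\un{G})$, so that only the class set $\mathrm{Cl}_\iy(\un{G})$ survives, which is then handled directly by strong approximation.

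First I would dispose of the local factors $H^1(\hat{\CO}_\fp,\un{G}_\fp)$ for each prime $\fp \neq \iy$. Since $\un{G}$ is smooth and connected over $\Sp(\Oiy)$, the base change $\un{G}_\fp$ is smooth over $\Sp(\hat{\CO}_\fp)$ with connected special fiber $\un{G}_{\fp,k_\fp}$ over the finite residue field $k_\fp$. Smoothness combined with the complete/henselian nature of $\hat{\CO}_\fp$ identifies $H^1(\hat{\CO}_\fp,\un{G}_\fp)$ with $H^1(k_\fp,\un{G}_{\fp,k_\fp})$, and Lang's theorem (a smooth connected algebraic group over a finite field has trivial $H^1$) kills the latter. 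So every local factor vanishes.

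Second, I would eliminate the generic factor $H^1(K,G)$. Since $G$ is semisimple, quasi-split and simply connected over the global function field $K$, Harder's theorem on the vanishing of Galois cohomology of quasi-split simply connected semisimple groups in the function field setting gives $H^1(K,G)=1$. Combined with the previous step, Nisnevich's sequence collapses to an isomorphism $H^1_\et(\Oiy,\un{G}) \cong \mathrm{Cl}_\iy(\un{G})$, and it remains to show $h_\iy(\un{G})=1$. Because $G$ is quasi-split over $K$, any $K$-parabolic base-changes to a $\hat{K}_\iy$-parabolic; in particular $G$ is isotropic at $\iy$ and $G(\hat{K}_\iy)$ is non-compact. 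This is precisely the hypothesis of strong approximation for simply connected semisimple groups with respect to $\iy$ (Kneser--Platonov--Prasad), which yields $\un{G}(\A) = \un{G}(\A_\iy) \cdot G(K)$, so $\mathrm{Cl}_\iy(\un{G})$ is a singleton.

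The main obstacle is not creative but organizational: three classical theorems (Lang locally, Harder generically, Prasad adelically) must be stitched together through Theorem \ref{Nis}, and one must verify that the quasi-split hypothesis over $K$ correctly supplies the isotropy needed at $\iy$ for strong approximation. The compatibility is automatic because a $K$-parabolic remains parabolic after any base change, so no extra hypothesis beyond what is stated is necessary.
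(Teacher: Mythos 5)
Your proof is correct and follows essentially the same route as the paper: Lang's theorem over the Henselian rings $\hat{\CO}_\fp$ kills the local terms, Harder's theorem kills $H^1(K,G)$, and Nisnevich's sequence then identifies $H^1_\et(\Oiy,\un{G})$ with $\mathrm{Cl}_\iy(\un{G})$. The only difference is that you prove $\mathrm{Cl}_\iy(\un{G})=1$ directly from Prasad's strong approximation theorem (using that quasi-splitness forces isotropy at $\iy$), whereas the paper cites Nisnevich's Corollary II.5.5 for this final step; the content is the same.
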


\begin{proof} 
At any prime $\fp$, as $\hat{\CO}_\fp$ is Henselian, we have 
$H^i(\hat{\CO}_\fp,\un{G}_\fp) \cong H^i(k_\fp,\ov{G}_\fp)$ for $i \geq 0$ 
where $\ov{G}_\fp := \un{G}_\fp \otimes_{\Sp \hat{\CO}_\fp} k_\fp$ 
(see Remark 3.11(a) in \cite[Ch.~III,~\S3]{Mil}).   
The right set for $i=1$ is trivial by Lang's Theorem 
(see \cite[Ch.VI, Prop.~5]{Ser}).   
Furthermore, $H^1(K,G)$ is trivial in the simply connected case due to Harder's result (see \cite[Satz~A]{Hard}), 
and so Nisnevich's sequence from Theorem \ref{Nis} obtained for $\un{G}$, 
shows that $\mathrm{Cl}_\iy(\un{G})$ is bijective to $H^1_\et(\Oiy,\un{G})$. 
But as $G$ is almost simple, simply connected and $\hat{K}_\iy$-isotropic, 
it admits the strong approximation property w.r.t. $S=\{\iy\}$ (see \cite[Theorem~A]{Pra}), 
which means that $\mathrm{Cl}_\iy(\un{G})$ is trivial, and the assertion follows. 
\end{proof}

\begin{lem} \label{Br(Oiy)} 
$\Br(\Oiy)=1$. 
\end{lem}

\begin{proof}
As $C^\af$ is smooth, $H^2_\et(\Oiy,\un{\BG}_m) = \Br(\Oiy)$     
classifying Azumaya $\Oiy$-algebras (see \cite[\S 2]{Mil}). 
Let $[D]$ be a class of central simple algebras in $\Br(K)$.  
At any prime $\fp$, $[D]$ is associated by the residue map $r_\fp$ with an extension of $k_\fp$, 
representing thus a class in $H^1(k_\fp,\Q/\Z)=\Hom(k_\fp,\Q/\Z)$ (the Galois action is trivial). 
The latter term is isomorphic to $\Q/\Z$, 
as the absolute Galois group of any finite field $k_\fp$ is isomorphic to $\hat{\Z}$.    
The ramification map $a:= \oplus_\fp r_\fp$ 
yields then the exact sequence from Class Field Theory (see Theorem 6.5.1 in \cite{GS}):   
\begin{equation} \label{sum Hasses invariants}
1 \to \Br(K) \xrightarrow{a = \oplus_\fp r_\fp} \bigoplus\limits_\fp \Q / \Z \xrightarrow{\sum_\fp \text{Cor}_\fp} \Q/\Z \to 1   
\end{equation} 
in which the corestriction map $\text{Cor}_\fp$ for any $\fp$ is an isomorphism 
induced by the Hasse-invariant $\Br(\hat{K}_\fp) \cong \Q/\Z$ (cf. \cite[Proposition~6.3.9]{GS}).  
On the other hand, as all residue fields of $K$ are finite thus perfect, 
and $C^\af$ is a one-dimensional regular scheme over $\BF_q$, 
it admits due to Grothendieck the following exact sequence 
(see \cite[Proposition~2.1]{Gro} and \cite[Example~2.22, case (a)]{Mil}):
\begin{equation}
1 \to \Br(\BF_q[C^\af]=\Oiy) \to \Br(\BF_q(C^\af) = K) \xrightarrow{\oplus_{\fp \neq \iy} r_\fp} \bigoplus\limits_{\fp \neq \iy} \Q/\Z,   
\end{equation} 
which means that $\Br(\Oiy)$ is the subgroup of $\Br(K)$ of classes that vanish under $r_\fp$ at any $\fp \neq \iy$.  
Thus omitting these $r_\fp, \fp \neq \iy$ in sequence (\ref{sum Hasses invariants}), 
results in $\Br(\Oiy) = \ker [\Q/\Z \xrightarrow{\text{Cor}_\iy} \Q/\Z]$=1. 
\end{proof}

\begin{cor} \label{H2 mu2} 
There is an isomorphism of abelian groups: $\Pic(C^\af)/2 \cong H^2_\et(\Oiy,\un{\mu}_2)$. 
\end{cor}

\begin{proof}
\'Etale cohomology applied on the Kummer's exact sequence defined over $\Sp \Oiy$:
\begin{equation*} 
1 \to \un{\mu}_2 \to \un{\BG}_m \to \un{\BG}_m \to 1 
\end{equation*}
gives rise to the following long exact sequence:  
\begin{align*}
           \Pic(C^\af) \xrightarrow{\phi:[\CL] \mapsto [2\CL]} \Pic(C^\af) 
           \to H^2_\et(\Oiy,\un{\mu}_2) \to \Br(\Oiy) \stackrel{(\ref{Br(Oiy)})}{=} 1  
\end{align*}
in which $\Pic(C^\af)/2 = \cok(\phi) \cong H^2_\et(\Oiy,\un{\mu}_2)$. 
\end{proof}

\begin{definition} \label{def torus}
Let $S$ be a scheme and $G$ an $S$-group. 
$G$ is an \emph{$S$-torus of rank $r$} if it is locally isomorphic in the $\text{fpqc}$-topology 
to $\un{\BG}^r_m$ (cf. \cite[Exp.~IX,~Def.~1.3]{SGA3}).   
\end{definition}

\begin{lem} \label{etale isotrivial}
Let $h: S' \to S$ be a finite surjective morphism of integral schemes, where $S$ is Noetherian, normal and one dimensional. 
Then $\N:=R^{(1)}_{S'/S}(\un{\BG}_m)$ is an $S$-torus iff $h$ is \'etale.   
\end{lem}

\begin{proof} 
Under the Lemma's hypothesis on $h$, it is locally free (cf. \cite[Lemma 5.2.4]{Sza}).  
Hence the induced Weil restriction functor $\un{R}:=R_{S'/S}(\un{\BG}_m)$ exists 
(cf. \cite[\S 7.6~Theorem~4]{BLR}),   
and so $\N$, being equal to $\ker[\un{R} \stackrel{\det}{\longrightarrow} \un{\BG}_{m}]$ 
(see Ex.\P 9(c),~p.148 \cite{Bou}), is well defined. 

$S'$ is integral thus connected. 
So given that $h$ is \'etale over the normal scheme $S$, 
it admits a Galois closure $S'' \to S$ that factors through it 
(see \cite[Proposition~5.3.9]{Sza} and \cite[Theorem~4']{BS}).  
Thus the associated fundamental group $\G$ 
admits a subgroup $\G_0 :=\text{Aut}(S''|S') \subset \G$ 
consisting of automorphisms of $S''$ that fix $S'$,  
such that: $S' \otimes_S S'' \cong (S'')^{|\G/\G_0|}$ (see \cite[Proposition~5.3.8]{Sza}). 
Therefore: $ \un{R} \otimes_S S'' \cong \BG_{m,S''}^{|\G/\G_0|}$ 
and so: 
$$ 
\N \otimes_S S'' \cong \ker\left[\un{\BG}_{m,S''}^{|\G/\G_0|} \stackrel{\det}{\longrightarrow} \un{\BG}_{m,S''}\right] \cong \un{\BG}^{{|\G/\G_0|}-1}_{m,S''},
$$ 
i.e. $\N$ is an $S$-torus.  

Conversely, if $h$ ramifies at some prime, then $\N$ is not reductive there (see \cite[\S 10.5]{Vos}). 
So given that $h$ is locally free thus flat, it must be \'etale as well.  
\end{proof}

\bk

\section{The Hasse principle and the class group of the orthogonal group} \label{Hasse Section}
Let $\X$ be the scheme of invertible symmetric $n \times n$-matrices with entries in $\Oiy$. 
It is a $\Sp \Oiy$-scheme, and its points correspond to (non-degenerate) $n$-dimensional integral forms, 
on which $\un{\textbf{GL}}_{n}$ defined over $\Sp \Oiy$ acts by 
$$ \forall g \in \un{\textbf{GL}}_n, \ F \in \X \ : \ g*F = g^t F g. $$
Let $f$ be an integral unimodular form represented by $F \in \X$. 
Then the \emph{orthogonal group} $\un{\textbf{O}}_V$ associated to $(V,f)$ is the stabilizer of $F$.  
Since this action is defined over $\Oiy$, it is an affine scheme defined over $\Sp \Oiy$. 
Its generic fiber is $\textbf{O}_V := \un{\textbf{O}}_V \otimes_{\Sp \Oiy} K$. 
As $2$ is a unit in $\Oiy$, 
the \emph{special orthogonal group} $\un{\textbf{SO}}_V$ is $\ker[\det: \OV \to \un{\mu}_2]$,  
where $\un{\mu}_2 := \Sp \Oiy[x]/(x^2-1)$  
(see 
Definition 1.6 and Corollary 2.5 in \cite{Con2}).    
For the same reason ($2$ is a unit in $\Oiy$), 
$\OV$ is smooth regardless of the parity of $n$, 
and $\SOV$ is a smooth closed subgroup with connected fibers (\cite[Theorem~1.7]{Con2}). 
If $n$ is even, then $\un{\textbf{O}}_V$ is a semidirect product of $\un{\textbf{SO}}_V$ and $\un{\mu}_2$ (see Corollary 2.5 and Remark 2.6 in \cite{Con2}), 
and it is a direct product of these if $n$ is odd (cf. \cite[Proposition~3.4]{Con2}).

\begin{definition}
Two integral forms share the same \emph{genus} 
if they are isomorphic over $K$ and over $\hat{\CO}_\fp$ for all primes $\fp$. 
We denote by $\text{gen}(f)$ the set of all integral forms of the same genus as $f$. 
\end{definition}

\begin{definition}
Given an integral form $f$, let $c(f)$ denote the number of $\Oiy$-isomorphism classes in $\text{gen}(f)$. 
We say that the \emph{Hasse principle} holds for $f$ if $c(f)=1$.  
\end{definition}

Platonov and Rapinchuk have shown in \cite[Prop.~8.4]{PR} -- in the number field case --    
that $c(f)$ equals the class number of its orthogonal group. 
In the following, we shall sketch briefly their proof, this time in the function field case: 

We consider the above $\Oiy$-scheme $\un{\textbf{GL}}_n$ (in which $\un{\textbf{O}}_V$ is embedded),  
its subgroup $\un{\textbf{SL}}_n$ and their extensions defined over $\hat{\CO}_\fp$ at any prime $\fp$ 
(see Section \ref{class group}) while referring to their adelic groups.  
Any element of $\un{\textbf{O}}_V(\A)$ can be put in $\un{\textbf{SL}}_n(\A)$ 
by multiplying by a suitable element of $\un{\textbf{GL}}_n(\A_\iy)$. 
Since the $K$-group $\textbf{SL}_n$ is split, simple and simply connected,  
it admits the strong approximation property whence 
$\un{\textbf{SL}}_n(\A) = \un{\textbf{SL}}_n(\A_\iy) \textbf{SL}_n(K)$ (cf. {\cite[Theorem~A]{Pra}}).  
It follows that $\un{\textbf{O}}_V(\A) \subseteq \un{\textbf{GL}}_n(\A_\iy) \textbf{GL}_n(K)$. 
Now according to the Stabilizer Formula \cite[Theorem~8.2]{PR}, 
$c(f)$ is equal to the number of double cosets $\un{\textbf{O}}_V(\A_\iy) \cdot x \cdot \un{\textbf{O}}_V(K)$ 
in the principal coset $\un{\textbf{GL}}_n(\A_\iy) \textbf{GL}_n(K)$ which is $h_\iy(\un{\textbf{O}}_V)$. 

\begin{cor} \label{c(f)}
$c(f) = h_\iy(\un{\textbf{O}}_V)$. 
\end{cor}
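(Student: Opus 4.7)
The plan is to translate $c(f)$ into an adelic orbit count and then use strong approximation to rewrite that count as the class number $h_\iy(\un{\textbf{O}}_V)$. The argument rests on three ingredients: the identification of $\un{\textbf{O}}_V$ as the scheme-theoretic stabilizer of $f \in \X$ under the $\un{\textbf{GL}}_n$-action; the Stabilizer Formula \cite[Theorem~8.2]{PR}; and strong approximation for $\textbf{SL}_n$ from \cite[Theorem~A]{Pra}.

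First I would identify $\gen(f)$ with the $\un{\textbf{GL}}_n(\A_\iy)\textbf{GL}_n(K)$-orbit of $f$ in $\X$: two forms share a genus exactly when they are $K$-isomorphic and $\hat{\CO}_\fp$-isomorphic at every $\fp \neq \iy$, so any $g \in \gen(f)$ corresponds to a pair $(a,b) \in \un{\textbf{GL}}_n(\A_\iy) \times \textbf{GL}_n(K)$ with $g = a \cdot f = b \cdot f$. Quotienting by $\un{\textbf{GL}}_n(\Oiy) = \un{\textbf{GL}}_n(\A_\iy) \cap \textbf{GL}_n(K)$ to produce $\Oiy$-isomorphism classes, and invoking the Stabilizer Formula with stabilizer $\un{\textbf{O}}_V$ at $f$, yields
$$ c(f) = \bigl| \un{\textbf{O}}_V(\A_\iy) \,\backslash\, \un{\textbf{GL}}_n(\A_\iy)\textbf{GL}_n(K) \,/\, \un{\textbf{O}}_V(K) \bigr|. $$

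Next I would show that the inclusion $\un{\textbf{O}}_V(\A) \hookrightarrow \un{\textbf{GL}}_n(\A_\iy)\textbf{GL}_n(K)$ induces a bijection on double cosets, identifying the right-hand side with $h_\iy(\un{\textbf{O}}_V)$. The containment $\un{\textbf{O}}_V(\A) \subseteq \un{\textbf{GL}}_n(\A_\iy)\textbf{GL}_n(K)$ proceeds in two steps: any $\sigma \in \un{\textbf{O}}_V(\A)$ has local determinants in $\{\pm 1\}$, so multiplying by a suitable diagonal $\pm 1$ matrix over $\A_\iy$ brings it into $\un{\textbf{SL}}_n(\A)$, after which strong approximation for the split, simple and simply connected group $\textbf{SL}_n$ gives $\un{\textbf{SL}}_n(\A) = \un{\textbf{SL}}_n(\A_\iy)\textbf{SL}_n(K)$. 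Surjectivity on double cosets is immediate from this containment, and injectivity follows from the uniqueness built into the stabilizer formula applied to the $\un{\textbf{O}}_V$-orbits.

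The main subtlety I anticipate is the injectivity check: one must verify that two elements of $\un{\textbf{O}}_V(\A)$ lying in a common $(\un{\textbf{GL}}_n(\A_\iy),\textbf{GL}_n(K))$-double coset already lie in a common $(\un{\textbf{O}}_V(\A_\iy),\un{\textbf{O}}_V(K))$-double coset, despite the factorization over $\un{\textbf{GL}}_n(\A_\iy)\textbf{GL}_n(K)$ being non-unique. Once this is secured, the corollary follows.
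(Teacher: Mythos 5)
Your proposal follows essentially the same route as the paper: identify $\gen(f)$ with an adelic orbit, apply the Stabilizer Formula \cite[Theorem~8.2]{PR} to express $c(f)$ as a count of $(\un{\textbf{O}}_V(\A_\iy),\un{\textbf{O}}_V(K))$-double cosets, and use strong approximation for $\textbf{SL}_n$ \cite[Theorem~A]{Pra} to establish $\un{\textbf{O}}_V(\A) \subseteq \un{\textbf{GL}}_n(\A_\iy)\textbf{GL}_n(K)$ and hence identify that count with $h_\iy(\un{\textbf{O}}_V)$. The injectivity subtlety you flag is real but is exactly what the Stabilizer Formula already packages (the double cosets it counts are those of $\un{\textbf{O}}_V(\A_\iy)$ and $\un{\textbf{O}}_V(K)$ inside $\un{\textbf{O}}_V(\A)\cap\un{\textbf{GL}}_n(\A_\iy)\textbf{GL}_n(K)$), so your argument is correct and matches the paper's.
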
      

\begin{prop} \label{class number and cohomology} 
There is a bijection of finite pointed sets: 
$\mathrm{Cl}_\iy(\un{\textbf{O}}_V) \cong H_\et^1(\Oiy,\un{\textbf{SO}}_V)$. 
\end{prop}

\begin{proof} 
Being affine, flat and smooth, $\un{\textbf{O}}_V$ admits by the Nisnevich's Theorem \ref{Nis} the exact sequence of pointed sets:
\begin{equation} \label{Nis OV}
1 \to \mathrm{Cl}_\iy(\un{\textbf{O}}_V) \to H^1_\et(\Oiy,\un{\textbf{O}}_V) \to H^1(K,\textbf{O}_V) 
                                         \times \prod_{\fp \neq \iy} H^1(\hat{\CO}_\fp,(\un{\textbf{O}}_V)_\fp).  
\end{equation}                                                                                                                                
Let $W(*)$ denote the Witt ring for the ring $*$. 
By Witt's Theorem, two forms are isomorphic 
if and only if they belong to the same Witt class and have the same rank (see \cite[Cor.~3.3]{MH}), 
whence $H^1_\et(\hat{\CO}_\fp,(\un{\textbf{O}}_V)_\fp)$ injects into $W(\hat{\CO}_\fp)$ 
and $H^1(\hat{K}_\fp,(\un{\textbf{O}}_V)_\fp)$ into $W(\hat{K}_\fp)$.  
Since $\hat{K}_\fp$ is complete, $W(\hat{\CO}_\fp) = W(k_\fp)$ injects into $W(\hat{K}_\fp)$ 
and we obtain the following commutative diagram:
$$\xymatrix{
H^1_\et(\hat{\CO}_\fp,(\un{\textbf{O}}_V)_\fp) \ar[r] \ar@{^{(}->}[d] & H^1(\hat{K}_\fp,(\un{\textbf{O}}_V)_\fp) \ar@{^{(}->}[d]  \\
W(\hat{\CO}_\fp)              \ar@{^{(}->}[r] & W(\hat{K}_\fp) 
}$$
which shows that $H^1(\hat{\CO}_\fp,(\un{\textbf{O}}_V)_\fp)$ embeds into 
$H^1(\hat{K}_\fp,(\un{\textbf{O}}_V)_\fp)$ for any $\fp$. 
Then due to Corollary 3.6 in \cite{Nis}, sequence (\ref{Nis OV}) simplifies to: 
\begin{equation}\label{Nis for O_V}
1 \to \mathrm{Cl}_\iy(\un{\textbf{O}}_V) \to H_\et^1(\Oiy,\un{\textbf{O}}_V) \to H^1(K,\textbf{O}_V). 
\end{equation}
As $C^\af$ is assumed to be smooth, $\Sp \Oiy = \Sp \BF_q[C^\af]$ is a normal scheme, 
i.e. it is integrally closed locally everywhere.   
In this case any finite \'etale covering of $C^\af$ arises by the normalization of $\Sp \Oiy$ 
in some separable unramified extension of $K$ (see \cite[Theorem~6.13]{Len}).    
Thus any non-trivial 1-cocycle in $H^1_\et(\Oiy,\un{\mu}_2)$ 
remains non-trivial after tensoring with $K$,  
i.e. $H^1_\et(\Oiy,\un{\mu}_2)$ is embedded in its generic fiber.   
Hence, $\un{\mu}_2$ being a direct or semidirect summand in $\un{\textbf{O}}_V$ 
(see at the beginning of this section), 
can be canceled in sequence (\ref{Nis for O_V}), leading to:  
$$ \mathrm{Cl}_\iy(\un{\textbf{O}}_V) \cong \ker[H^1_\et(\Oiy,\SOV) \to H^1(K,\textbf{SO}_V)]. $$ 
The situation for $\SOV$ is simpler: 
having (smooth) connected fibers, $H^1_\et(\hat{\CO}_\fp,(\SOV)_\fp)$ 
vanishes for all primes $\fp$ (by Lang's Lemma), 
thus not only admitting again due to Corollary 3.6 in \cite{Nis} the exact sequence: 
\begin{equation} \label{SOV sequence}
1 \to \mathrm{Cl}_\iy(\un{\textbf{SO}}_V) \to H_\et^1(\Oiy,\un{\textbf{SO}}_V) \xrightarrow{\p} H^1(K,\textbf{SO}_V), 
\end{equation}
which shows that $\mathrm{Cl}_\iy(\un{\textbf{O}}_V) = \mathrm{Cl}_\iy(\SOV)$,    
but can be simplfied even more to (cf. \cite[I.3.5.2]{Nis} and \cite[Theorem~3.4]{Gon1}): 
$$ 1 \to \mathrm{Cl}_\iy(\un{\textbf{SO}}_V) \to H_\et^1(\Oiy,\un{\textbf{SO}}_V) \to \Sh^1_{\{\iy\}}(K,\textbf{SO}_V) \to 1 $$
in which the right term is the first Tate-Shafarevich group w.r.t. $\{\iy\}$, namely: 
$$ 
\Sh^1_{\{\iy\}}(K,\textbf{SO}_V) := \ker\left[ H^1(K,\textbf{SO}_V) \to \prod_{\fp \neq \iy} H^1(\hat{K}_\fp,(\textbf{SO}_V)_\fp) \right]. 
$$  
The pointed set $H^1(K,\textbf{SO}_V)$ properly (i.e. by $\det=1$ isomorphisms) 
classifies $K$-forms isomorphic to $f$ over some finite Galois extensions of $K$, 
therefore sharing all the same rank and discriminant. 
So according to the Hasse-Minkowsky principle (cf. \cite[VI.3.1]{Lam}),  
these forms are classified via their Hasse-invariant locally everywhere.  
But as the base point $f$ is unimodular, representatives of any class in $H^1(K,\textbf{SO}_V)$ are $\Oiy$-regular as well,  
thus their local Hasse-invariants belong to $\Br(\Oiy)$, being trivial by Lemma \ref{Br(Oiy)}. 
This means that $\text{Cl}_\iy(\SOV)$ surjects on $H_\et^1(\Oiy,\un{\textbf{SO}}_V)$. 
On the other hand, $\text{Cl}_\iy(\SOV)$ is bijective to the first Nisnevich's cohomology set $H^1_{\text{Nis}}(\Oiy,\SOV)$ 
(cf. \cite[I.~Theorem~2.8]{Nis} and \cite[4.1]{Mor}), 
classifying $\SOV$-torsors in the Nisnevich's topology. 
But Nisnevich's covers are \'etale,  
so it is a subset of $H_\et^1(\Oiy,\un{\textbf{SO}}_V)$, and the assertion follows.  
\end{proof}

In particular, Proposition \ref{class number and cohomology} plus Corollary \ref{c(f)} yield:

\begin{cor} \label{Hasse}
The Hasse principle holds for an integral unimodular form 
iff $H^1_\et(\Oiy,\un{\textbf{SO}}_V) = 0$.    
\end{cor}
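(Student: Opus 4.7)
The plan is to deduce the statement by direct substitution from the two results that immediately precede it, so no new argument is required. First, I would invoke the Corollary just before Proposition \ref{class number and cohomology}, which, via the function-field version of the Platonov--Rapinchuk Stabilizer Formula together with strong approximation for $\textbf{SL}_n$, gives $c(f) = h_\iy(\un{\textbf{O}}_V)$. Since $h_\iy(\un{\textbf{O}}_V) = |\mathrm{Cl}_\iy(\un{\textbf{O}}_V)|$ by definition, the Hasse condition $c(f)=1$ is tautologically equivalent to the triviality of the class set $\mathrm{Cl}_\iy(\un{\textbf{O}}_V)$.

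Second, I would apply Proposition \ref{class number and cohomology} to substitute $\mathrm{Cl}_\iy(\un{\textbf{O}}_V)$ by $\ker[H^1_\et(\Oiy,\un{\textbf{O}}_V) \to H^1(K,\textbf{O}_V)]$. Concatenating the two equivalences delivers the corollary as a one-line consequence: $c(f)=1$ iff this kernel is trivial.

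There is no substantive obstacle; the content lies entirely in the two invoked results. The only point worth flagging is where the unimodularity hypothesis is consumed: it is exactly what guarantees that $\un{\textbf{O}}_V$ is an affine, flat and smooth $\Oiy$-group scheme, via the decomposition $\un{\textbf{O}}_V \cong \un{\textbf{SO}}_V \times \mu_2$ in characteristic not $2$ together with the smoothness of $\un{\textbf{SO}}_V$ recalled at the start of Section \ref{Hasse Section}. This smoothness is precisely the input required both for Nisnevich's sequence (Theorem \ref{Nis}) and for the Witt-ring comparison diagram that, in the proof of Proposition \ref{class number and cohomology}, collapses the four-term sequence to the three-term sequence on which the identification of $\mathrm{Cl}_\iy(\un{\textbf{O}}_V)$ with the kernel rests. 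The corollary is therefore a formal bookkeeping consequence of the preceding machinery rather than an independent argument.
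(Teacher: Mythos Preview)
Your proposal is correct and matches the paper's own treatment: the corollary is stated immediately after Proposition~\ref{class number and cohomology} with the phrase ``In particular, Proposition~\ref{class number and cohomology} yields,'' so the paper likewise views it as a direct concatenation of $c(f)=h_\iy(\un{\textbf{O}}_V)$ with the identification of $\mathrm{Cl}_\iy(\un{\textbf{O}}_V)$ as the kernel. Your remark on where unimodularity enters is accurate and a helpful gloss, though the paper does not spell it out at this point.
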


For rank $n > 2$, we consider the following construction (see in \cite[\S2]{Bas}):  \\
Let $\textbf{C}(f)$ be the Clifford algebra associated to $f$.  
It is a $\Z_2$-graded algebra. 
The linear map $v \mapsto -v$  on $V$   
extends to an algebra automorphism $\a: \textbf{C}(f) \to \textbf{C}(f)$ 
(acting as the identity on the even part and negation on the odd part).  
The \emph{Clifford group} associated to $(V,f)$ is 
$$ 
\textbf{CL}(f) := \{ u \in \textbf{C}(f)^\times \ : \  \a(u)vu^{-1} \in V \ \forall v \in V \}. 
$$
The identity map on $V$ (viewed as its inclusion in the opposite algebra of $\textbf{C}(f)$) 
extends to an anti-automorphism of $\textbf{C}(f)$ which we denote by $t$. 
The composition $\a \circ t$ mapping $v \mapsto \bar{v}$    
gives rise to the norm $N:\textbf{CL}(f) \to \Oiy^\times = \BF_q^\times: v \mapsto v\bar{v}$ 
(for $v \in V$ it is just $N(v) = -v^2=-f(v,v)$).   
We define $\un{\textbf{Pin}}_V(\Oiy) := \ker(N)$. 
This group admits an underlying group scheme over $\Sp \Oiy$ 
which we denote by $\un{\textbf{Pin}}_V$. 
The homomorphism $\pi:\un{\textbf{Pin}}_V \to \un{\textbf{O}}_V$ 
sending $v$ to the isometry stabilizing it, is a double covering,  
yielding the following short exact sequence of $\Oiy$-group schemes: 
\begin{equation} \label{Spin short sequence}
1 \to \un{\mu}_2 \to \un{\textbf{Spin}}_V \stackrel{\pi}{\rightarrow} \un{\textbf{SO}}_V \to 1 
\end{equation}
where $\un{\textbf{Spin}}_V := \pi^{-1}(\un{\textbf{SO}}_V) \subset \un{\textbf{Pin}}_V$.  

\begin{prop} \label{H2 SO_V=1} 
Let $(V,f)$ be an integral unimodular space of rank $n>2$. 
Then $\text{Cl}_\iy(\OV)$ is bijective as a pointed set to $H^2_\et(\Oiy,\un{\mu}_2)$ 
(being isomorphic to $\Pic(C^\af)/2$).     
\end{prop}

\begin{proof} 
The schemes in sequence (\ref{Spin short sequence}) are smooth, whence \'etale cohomology yields the exact sequence 
of pointed sets: 
\begin{equation} 
H^1_\et(\Oiy,\un{\textbf{Spin}}_V) \to H^1_\et(\Oiy,\un{\textbf{SO}}_V) \stackrel{\dl}{\rightarrow} H^2_\et(\Oiy,\un{\mu}_2) 
\end{equation}
in which since $\Oiy$ is of Douai type -- see Definition~5.2 and Example~5.4(iii) in \cite{Gon2} -- 
and as $\un{\textbf{SO}}_V = \un{\textbf{Spin}}_V^{\text{ad}}$ while: $Z(\un{\textbf{Spin}}_V) = \un{\mu}_2$,  
$\dl$ is surjective. 
Furthermore, $\un{\textbf{Spin}}_V$ is affine, smooth and connected, 
and its generic fiber is simple, simply connected. 
As $\det(f) \in \BF_q^\times$, the generic fiber of $(V,f)$ 
admits a regular model over $\hat{\CO}_\iy$ (see \cite[92:1]{OMe}). 
Its reduction at $\iy$ remains regular of dimension $n \geq 3$ over the finite field $k_\iy$ thus isotropic (\cite[62:1b]{OMe}). 
Then its lift back to $\hat{\CO}_\iy$ is again isotropic due to Hensel's Lemma (see \cite[III.~Lemma~19.4]{EKM}),  
as well as $\textbf{Spin}_V$ over $\hat{K}_\iy$. 
Hence $H^1_\et(\Oiy,\un{\textbf{Spin}}_V)$ is trivial by Lemma \ref{H1(Gsc) trivial2},  
which means due to the exactness that $\ker(\dl) = \{0\}$.  
This does not imply yet that $\dl$ is injective, since $\un{\textbf{SO}}_V$ is non-commutative for $n>2$,  
whence $H^1_\et(\Oiy,\un{\textbf{SO}}_V)$ has no reason to be a group. 
In order to deduce the injectivity of $\dl$, we consider the following diagram 
induced by some non-trivial $\un{\textbf{SO}}_V$-torsor $P$, 
as described in \cite[Cha.~IV, Proposition~4.3.4]{Gir}:  
$$\xymatrix{
H^1_\et(\Oiy,\un{\textbf{SO}}_V)    \ar[r]^{\dl} \ar[d]_{\cong}^{\theta_P} & H^2_\et(\Oiy,\un{\mu}_2) \ar[d]^{r}_{\cong}   \\
H^1_\et(\Oiy,^P \un{\textbf{SO}}_V) \ar[r]^{\dl'}                  & H^2_\et(\Oiy,\un{\mu}_2) 
}$$
in which the map $\dl'$ is the one obtained by applying \'etale cohomology on 
the short exact sequence (\ref{Spin short sequence})  
while replacing $\SOV$ by the twisted group scheme $^P\SOV = \un{\textbf{SO}}_{(^P V)}$,     
$\theta_P$ is the induced twisting bijection, 
and $r$ is the translation by $-\dl(P)$. 
According to \cite[Cha.~IV, Proposition~4.3.4(i),(ii)]{Gir} 
this diagram is commutative and there is a bijection: 
$$ \left\{ x \in H^1_\et(\Oiy,\un{\textbf{SO}}_V) : \dl(x) = \dl(P) \right\} \cong \ker(\dl). $$
But as shown above, in our case $\ker(\dl)$ is trivial, 
implying that $\dl$ is injective and eventually is a bijection.  
Due to Proposition \ref{class number and cohomology}, 
$\text{Cl}_\iy(\OV)$ is bijective as a pointed set to $H^1_\et(\Oiy,\un{\textbf{SO}}_V)$, 
and therefore is bijective as a pointed set to $H^2_\et(\Oiy,\un{\mu}_2)$ as well. 
The rest is Corollary \ref{H2 mu2}.     
\end{proof}

\begin{theorem} \label{main theorem} 
Let $f$ be a unimodular form of rank $n$ defined over $\BF_q[C^\af]$. \\
The Hasse principle holds for $f$: \\ 
for $n = 2$ if the integral closure of $\BF_q[C^\af]$ in the splitting field of $\textbf{SO}_V$ is a UFD, and: \\
for $n > 2$ if and only if $|\Pic(C^\af)|$ is odd. 
\end{theorem}

\begin{proof} 
For rank $2$, $\SOV$ is a one dimensional norm $\Oiy$-torus.  
This derives from being one dimensional and smooth, and from the connectivity of the fibers (see at the beginning of this section). 
According to Lemma \ref{etale isotrivial}, such one dimensional norm $\Oiy$-tori arise from quadratic \'etale extensions, 
hence are being classified by $H^1_\et(\Oiy,\Z/2\Z)$.  
If $\Oiy$ is a UFD, 
the Kummer sequence defined over $\Sp \Oiy$ implies that ($2 \in \Oiy^\times$, hence the scheme $\Z/2\Z$ is isomorphic to $\un{\mu}_2$  over $\Sp \Oiy$):
$$ H^1_\et(\Oiy,\Z/2\Z) \cong \Oiy^\times/ (\Oiy^\times)^2 = \BF_q^\times/ (\BF_q^\times)^2 \cong H^1_\et(\BF_q,\Z/2\Z). $$
This means that given that $\Oiy$ is a UFD, any quadratic extension of it, producing a one dimensional norm $\Oiy$-torus,  
arises from a quadratic extension of $\BF_q$ (recall that $\text{char}(K) \neq 2$, 
hence the quadratic Artin-Schreier extensions are not to be considered here). 
Now if $\SOV$ splits over $\Oiy$, then $H^1_\et(\Oiy,\SOV) = H^1_\et(\Oiy,\un{\BG}_m) = \Pic(C^\af) = 0$ as $\Oiy$ is a UFD. 
Otherwise, it fits into an exact sequence of $\Oiy$-tori:
\begin{equation} \label{norm torus sequence}
1 \to \SOV \to \un{R}:=R_{\Oiy'/\Oiy}(\un{\BG}_m) \to \un{\BG}_m \to 1  
\end{equation} 
in which $\Oiy'$ is assumed to be a UFD. 
As $\ov{\textbf{SO}}_V := \SOV \otimes_{\Sp \Oiy} \BF_q$ is connected, by Lang's Lemma $H^1(\BF_q,\ov{\textbf{SO}}_V) \cong \BF_q^\times / \Nr(\BF_{q^2}^\times)=1$. 
But $\BF_{q^2}^\times \subseteq \Oiy'^\times = \un{R}(\Oiy)$, which means that $\un{R}(\Oiy) \to \BF_q^\times$ is surjective.  
Moreover, by Shapiro's Lemma $H^1_\et(\Oiy,\un{R}) \cong H^1_\et(\Oiy',\un{\BG}_{m,\Oiy'}) = \Pic(\Oiy')$ being trivial by the assumption. 
Thus applying \'etale cohomology on sequence (\ref{norm torus sequence}) implies that $H^1_\et(\Oiy,\SOV)$ vanishes as well, 
and the assertion follows from Corollary \ref{Hasse}. 

For higher ranks, this is just Proposition \ref{H2 SO_V=1} plus Corollary \ref{H2 mu2}. 
\end{proof}

\begin{example} \label{over affine line}
Let $C$ be of genus zero having a $\BF_q$-rational point which we assign as $\iy$. 
Then $\Oiy = \BF_q[C^\af]$ is a UFD, as well as any scalar extension of it (see \cite[Theorem~5.1]{Sam}), 
whose generic fiber may be the splitting field if $n=2$ of $\textbf{SO}_V$ (see in the proof of Theorem \ref{main theorem}).    
Therefore the Hasse principle holds for any unimodular form defined over it of any rank.    
So Theorem \ref{main theorem} is a generalization of Theorem~3.1 in \cite{Ger1} 
in which the elementary case of $\Oiy = \BF_q[t]$ is treated.   
\end{example}

\begin{remark}
The unimodularity condition is essential (though not necessary) for the validity of the Hasse principle even if $\Oiy$ is a UFD. 
It is necessary for the Clifford algebra construction if $n>2$, but is also required for rank $2$. 

For example, let $C$ be the projective line over $\BF_q$ and $\iy = (1/x)$. Then $\Oiy = \BF_q[x]$ is a UFD. 
Let $f$ and $g$ be the $\Oiy$-forms represented by  
$F = \diag((1-x^2)^2,1)$  and  $G = \diag((1-x)^2,(1+x)^2)$, respectively.  
Let 
$$ Q = \left( \begin{array}{cc}
 \fc{1}{1+x}  & 0  \\ 
         0    & 1+x  \\ 
\end{array}\right) \in \textbf{GL}_2(\hat{\CO}_\fp) \ \ \forall \fp \neq (1+x) $$ 
and  
$$ P = \left( \begin{array}{cc}
 0   & \fc{1}{1-x}   \\ 
 1-x & 0  \\ 
\end{array}\right) \ \in \textbf{GL}_2(\hat{\CO}_\fp) \ \ \forall \fp \neq (1-x). $$ 
Then $Q^t F Q = P^t F P = G$. 
This shows that $f$ and $g$ belong to the same genus.  
But they are not, however, isomorphic over $\Oiy$, 
since mapping the eigenvalue $(1-x^2)^2$ in $F$ to $(1-x)^2$ or $(1+x)^2$ in $G$    
can be done only by dividing by a non-constant element, which is not allowed in $\Oiy = \BF_q[x]$. 
\end{remark}

\begin{example} \label{EC}
Let $C$ be an elliptic $\BF_q$-curve and suppose $\iy$ is $\BF_q$-rational (such one must exist). 
The restriction of $C$ to $C^\af$ gives rise to an exact sequence (see \cite[Cha.II,~Prop.6.5(c)]{Hart}):
$$ 0 \to \Z \to \Pic(C) \to \Pic(C^\af) \to 0 $$
in which the first map $1 \mapsto 1 \cdot \{\iy\}$ is injective because the degree of a curve's divisor is well defined. 
As we assumed $\iy$ is $\BF_q$-rational, this sequence splits as abelian groups. 
The degree map on $\Pic(C)$ yields another exact sequence which again splits as abelian groups:
$$ 0 \to \Pic^0(C) \to \Pic(C) \to  \Z  \to 0.  $$ 
We get an isomorphism of summands $\Pic^0(C) \cong \Pic(C^\af)$.  
Together with another isomorphism of abelian groups: $C(\BF_q) \cong \Pic^0(C) ; P \mapsto [P]-[\iy]$ 
we may deduce that: 
\begin{equation} \label{iso with Pic}
C(\BF_q) \cong \Pic(C^\af). 
\end{equation}
Hence according to Theorem \ref{main theorem}, any unimodular form $f$ of rank $\geq 3$ defined over $\Sp \Oiy$ 
admits the Hasse principle if and only if there is no element of order $2$ in $C(\BF_q)$. 
For example, suppose $q>3$ and $\iy=(0:1:0) \in C(\BF_q)$ is removed, 
so the remaining (non-singular) affine curve $C^\af$ is given in affine coordinates by the Weierstrass form
$$ y^2 = x^3 + ax + b \ \ \text{for some} \ a,b \in \BF_q.  $$ 
Then $f$ admits the Hasse principle if and only if $C^\af$ does not have any $\BF_q$-point on the $x$-axis.  
\end{example}

\begin{cor} \label{c(f) in EC n geq 3}
Let $C$ be an elliptic $\BF_q$-curve and suppose $\iy \in C(\BF_q)$. 
Then for any integral unimodular form $f$ of any rank $n>2$ one has $c(f) = |C(\BF_q)/2|$.    
\end{cor}

\begin{lem} \label{number of classes in EC}
Let $C$ be an elliptic $\BF_q$-curve. 
Suppose that $-1 \in (\BF_q^\times)^2$ and $\iy \in C(\BF_q)$. 
Then $c(1_2)=|C(\BF_q)|$.   
\end{lem}

\begin{proof}
The orthogonal group scheme over $\Sp \Oiy$ of $1_2$ is $\un{\textbf{O}}_2$. 
Consider the exact sequence of smooth $\Oiy$-schemes (recall that $\text{char}(K)$ is odd): 
$$  1 \to \un{\textbf{SO}}_2 \to \un{\textbf{O}}_2 \to \un{\mu}_2 \to 1. $$ 
As $-1 \in (\BF_q^\times)^2$, the one dimensional torus $\un{\textbf{SO}}_2$ is split, 
and so $H^1_\et(\Oiy,\un{\textbf{SO}}_2) = \Pic(C^\af)$. 
Then due to isomorphism (\ref{iso with Pic}), 
$C(\BF_q) \cong H^1_\et(\Oiy,\un{\textbf{SO}}_2)$, 
classifying according to Proposition \ref{class number and cohomology} the integral forms in $\text{gen}(1_2)$. 
According to Hilbert 90 Theorem, 
this set is also equal to $\ker[H^1_\et(\Oiy,\un{\textbf{SO}}_2) \to H^1(K,\textbf{SO}_2)]$,  
so the geometric interpretation of this result   
is that non-trivial principal $\un{\textbf{SO}}_2$-bundles,  
which are in this case non-trivial line bundles of $C^\af$, 
become trivial while tensoring with $K$. 
This causes the failure of the Hasse principle.   
\end{proof}

\begin{remark} 
Lemma \ref{number of classes in EC} with isomorphism (\ref{iso with Pic}) show that the UFD condition for $\Oiy$  
is essential for the validity of the Hasse principle in case of rank $2$, even for unimodular forms as $1_2$. 
Moreover, even if $\Oiy$ is a UFD, it is still essential to assume for $n=2$ 
that the integer closure of $\Oiy$ in the splitting field of $\textbf{SO}_V$ is a UFD as well. 

For example, the elliptic curve $C := \{ ZY^2 = X^3 - XZ^2 - Z^3 \}$ defined over $\BF_3$ (in which $-1$ is not a square)    
has a single $\BF_3$-rational point $(0:1:0)$. 
Suppose we choose it to be $\iy$.  
Then $C^\af = \{ y^2=x^3-x-1 \}$ and $\Oiy = \BF_3(C^\af)$ is a UFD (by (\ref{iso with Pic})). 
But the extension $\Oiy'$ by $i = \sqrt{-1}$, 
being the integer closure of $\Oiy$ in the splitting field of $\textbf{SO}_2$, 
gives rise to more rational points of $C$ like $(-1:i:1)$. 
Thus as $\Pic(\Oiy)=0$, \'etale cohomology applied on sequence (\ref{norm torus sequence}) 
implies the bijection of the non-trivial sets: $H^1_\et(\Oiy,\un{\textbf{SO}}_2) \cong \Pic(\Oiy')$ 
(see in the proof of Theorem \ref{main theorem}),    
which shows according to Corollary \ref{Hasse} that the Hasse principle fails for the $\Oiy$-form $1_2$.  
\end{remark}

\begin{example} \label{EC explicit example}
Let $C = \{ Y^2Z = X^3 + XZ^2\}$ defined over $\BF_5$. 
Then: 
$$ C(\BF_5) = \{ (0:0:1), (1:0:2), (1:0:3), (0:1:0) \} \cong \Z/4\Z. $$
Removing $\iy = (0:1:0)$, we get the affine elliptic curve: 
$$ C^\af = \{y^2 = x^3 + x \} \ \ \text{with:} \ \ \Oiy = \BF_5[x,y]/(y^2-x^3-x). $$
According to Lemma \ref{number of classes in EC}, 
we have four $\Oiy$-isomorphism classes in $\text{gen}(1_2)$. 
The key obstruction here for finding explicit integral forms from the same genus of $1_2$ 
which are not $\Oiy$-isomorphic to it, 
is using the fact that $\Oiy$ is not a UFD in such a way 
that there exist distinct isomorphisms to $1_2$, defined over integer rings at distinct places.  

Explicitly, the affine supports of the points in $C(\BF_5)$ are: 
$$ \{ (0,0),  (1/2,0) = (3,0),  (1/3,0) = (2,0), \iy \}.  $$
Each one of the first three points corresponds to an intersection between $(y)$ and another prime: 
$$ (0,0) \leftrightarrow (y) \cap (x), \ (3,0) \leftrightarrow (y) \cap (x-3), \ (2,0) \leftrightarrow (y) \cap (x-2) $$
while $\iy$ is associated to the all affine curve $C^\af$. 
For any $t \in \{1,x,x+2,x-2\}$, the matrix 
$$ P_{(t)} =   
\left( \begin{array}{cc}
 1 + 3t  &  2 - t \\ 
 3 + t   &  1 + 3t  \\ 
\end{array}\right) $$ 
with $\det(P_{(t)}) = 2t$ is invertible in $C^\af - \{ t\}$ (in all $C^\af$ if $t=1$),   
giving rise to the integral form represented by
$$ G_t = P_{(t)}^tP_{(t)} =
\left( \begin{array}{cc}
 2t  & 0  \\
 0  & 2t  \\ 
\end{array}\right).$$
On the other hand, using the relation (which is due to the fact that $-1 \in (\BF_5^\times)^2$): 
$$ y^2 = x(x+2)(x-2) $$
one may define the matrices: 
$$Q_{(x)} =  \fc{y}{(x-2)(x+2)} \left( \begin{array}{cc}
 x-1     & x+1  \\  
 -(x+1)  & x-1  \\ 
\end{array}\right), \ 
Q_{(x-2)} =  \fc{y}{x(x+2)} \left( \begin{array}{cc}
 x-1  & x+3  \\  
 x+3  & -(x-1)  \\ 
\end{array}\right) $$
$$ 
Q_{(x+2)} =  \fc{y}{x(x-2)} \left( \begin{array}{cc}
 x+1 & x+2  \\  
 x+2 & -(x+1)  \\ 
\end{array}\right)
$$ 
satisfying each $Q_{(t)}^tQ_{(t)}=G_t$ as well, and being invertible at the remaining place $(t)$. 

\bk

We get four non-equivalent $1$-cocycles, since $\sqrt{\det(G_{t_1})/\det(G_{t_2})} = t_1/t_2$ 
is not invertible over $\Oiy$ 
for any $t_1 \neq t_2$. 
The generic fibers of these cocycles are trivial, since both $P_{(t)}$ and $Q_{(t)}$ are well defined over $K$ for any $t$,  
and the transition maps $Q_{(t)}^t Q_{(t)} \cdot (P_{(t)}^t P_{(t)})^{-1}$ are trivial.   
The four corresponding non-isomorphic integral forms in $\text{gen}(1_2)$ are those represented by $\{G_t\}$. 

\bk

For $n>2$, however, any unimodular form $f$ defined over this domain $\BF_5[C^\af]$ 
will admit according to Corollary \ref{c(f) in EC n geq 3} only $|C(\BF_q)/2| = 2$ classes in the genus of $f$. 
\end{example}


\bk


\begin{thebibliography}{[groups]}


\bibitem[SGA4]{SGA4} M.\ Artin, A. \ Grothendieck, J.-L. Verdier, {\em Th\'eorie des Topos et Cohomologie \'Etale des Sch\'emas} (SGA 4) LNM, Springer, 1972/1973. 


\bibitem[Bas]{Bas} H.\ Bass, {\em Clifford algebras and spinor norms over a commutative ring}, Amer. J. Math., {\bf 96}, 1974, 156--206.

\bibitem[BS]{BS} M.\ Bhargava, M.\ Satriano, {\em On a notion of Galois closure for extensions of rings}, J. Eur. Math. Soc. {\bf 16}, Issue 9, 2014, 1881--1913.  

\bibitem[BLR]{BLR} S.\ Bosch, W.\ L\"utkebohmert, and M.\ Raynaud, {\em N\'eron models}, Springer-Verlag, Berlin, 1990. 

\bibitem[Bou]{Bou} N.\ Bourbaki, {\em \'El\'ements of Math\'ematics, Commutative Algebra}, Hermann, Paris, 1972. 



\bibitem[Con1]{Con1} B.\ Conrad, {\em Finiteness theorems for algebraic groups over function fields}, Compos. Math.  {\bf 148}, 2012, no. 2, 555-639. 

\bibitem[Con2]{Con2} B.\ Conrad, {\em Math 252. Properties of orthogonal groups}, \\ http://math.stanford.edu/\textasciitilde conrad/252Page/handouts/O(q).pdf

\bibitem[SGA3]{SGA3} M.\ Demazure, A.\ Grothendieck, {\em S\'eminaire de G\'eom\'etrie Alg\'ebrique du Bois Marie - 1962-64 - Sch\'emas en groupes}, 
Tome II, R\'e\'edition de SGA3, P.\ Gille, P.\ Polo, 2011.  

\bibitem[EKM]{EKM} R.\ Elman, N.\ Karpenko, A.\ Merkurjev, {\em The Algebraic and Geometric Theory of Quadratic Forms}, Colloquium Publications, {\bf 56}, 2008.  


\bibitem[Ger1]{Ger1} L.\ J.\ Gerstein, {\em Unimodular quadratic forms over global function fields}, J. Number Theory {\bf 11}, 1979, 529--541.

\bibitem[Ger2]{Ger2} L.\ J.\ Gerstein, {\em Basic Quadratic Forms}, Graduate Studies in Mathematics, Amer. Math. Soc. {\bf 90}, 2008. 

\bibitem[GS]{GS} P.\ Gille, T.\ Szamueli, {\em Central simple algebra and Galois cohomology}, Cambridge studies in adv. math. {\bf 101}, 2006. 

\bibitem[Gir]{Gir}   J.\ Giraud, {\em Cohomologie non ab\'elienne}, Grundlehren math. Wiss., Springer-Verlag Berlin Heidelberg New York, 1971.

\bibitem[Gon1]{Gon1} C.\ D.\ Gonz\'alez-Avil\'es, {\em Abelian class groups of reductive group schemes}, Israel Journal of Matheatics, {\bf 196}, 2013, 175--214. 

\bibitem[Gon2]{Gon2} C.\ D.\ Gonz\'alez-Avil\'es, {\em Quasi-abelian crossed modules and nonabelian cohomology}, Journal of Algebra {\bf 369}, 2012, 235--255. 


\bibitem[Gro]{Gro} A.\ Grothendieck, {\em Le groupe de Brauer III: Exemples et compl\'ements}, 
Dix Expos\'es sur la Cohomologie des Sch\'emas, North-Holland, Amsterdam, 1968, 88–-188. 

\bibitem[SGA1]{SGA1} A.\ Grothendieck, M. \ Raynaud, {\em Rev\^etements \'Etales et Groupe Fondamental}, 1960--1961, Lect. Notes Math. \textbf{224}, Springer, Heidelberg, 1971. 


\bibitem[Hard]{Hard} G.\ Harder, {\em \"Uber die Galoiskohomologie halbeinfacher algebraischer Gruppen, III}, J.\ Reine Angew.\ Math. {\bf 274/275}, 1975, 125--138.

\bibitem[Hart]{Hart} R.\ Hartshorne, {\em Algebraic Geometry}, Springer, 1977.   


\bibitem[Lam]{Lam} T.\ Y.\ Lam, {\em Introduction to quadratic forms over fields}, Graduate Studies in Math.,  Amer. Math. Soc. {\bf 67}, Amer.\ J.\ Math. {\bf 78} (1956), 555--563.

\bibitem[Len]{Len} H.\ W.\ Lenstra, {\em Galois theory for schemes}, http://websites.math.leidenuniv.nl/algebra/GSchemes.pdf

\bibitem[Liu]{Liu} Q.\ Liu, {\em Algebraic Geometry and Arithmetic Curves}, Oxford University Press, 2002. 


\bibitem[Mil]{Mil} J.\ S.\ Milne, {\em \'Etale Cohomology}, Princeton University Press, Princeton, 1980.

\bibitem[MH]{MH} J.\ Milnor, D.~Husemoller, {\em Symmetric Bilinear Forms}, Springer-Verlag, Berlin, Heidelberg, New York, 1973.

\bibitem[Mor]{Mor} M.\ Morishita, {\em On S-class number relations of algebraic tori in Galois extensions of global fields}, 
Nagoya Math.\ J. {\bf 124}, 1991, 133--144.

\bibitem[Nis]{Nis} Y.\ Nisnevich, {\em \'Etale Cohomology and Arithmetic of Semisimple Groups}, PhD thesis, Harvard University, 1982.

\bibitem[OMe]{OMe} 0.\ T.\ O'Meara, {\em Introduction to Quadratic Forms}, Springer-Verlag, New York Berlin, 1971. 


\bibitem[PR]{PR} V.\ Platonov, A.\ Rapinchuk, {\em Algebraic Groups and Number Theory}, Academic Press, San Diego 1994. 

\bibitem[Pra]{Pra} G.\ Prasad, {\em Strong approximation for semi-simple groups over function fields}, Ann.\ of Math. {\bf 105}, 1977, 553--572.



\bibitem[Sam]{Sam} P.\ Samuel, {\em On unique factorization domains}, Illinois J. Math. {\bf 5}, 1961, 1--17. 

\bibitem[Ser]{Ser} J.-P.\ Serre, {\em Algebraic Groups and Class Fields}, Springer, Berlin, 1988.


\bibitem[Sza]{Sza} T.\ Szamuely, {\em Galois groups and fundamental groups}, Cambridge Studies in Advanced Mathematics, {\bf 117}, Cambridge University Press, Cambridge, 2009.


\bibitem[Vos]{Vos} V.\ E.\ Voskresenskii, {\it Algebraic Groups and Their Birational Invariants}, Translations of Mathematical Monographs {\bf 179}, American Mathematical Society, Providence RI, 1998. 


\end{thebibliography}
\end{document}